\newtheorem{Th}{Theorem}
\newtheorem{Co}{Corollary}
\newtheorem{Pp}{Proposition}
\newtheorem{Le}{Lemma}
\newif\ifeng
\newif\ifrus
\newif\ifhide
\title{{\normalsize\tt\hfill\jobname.tex}\\
On recurrence and convergence rate 
of generalised Fisher -- Wright diffusion with mutation}
\author{R.Yu. Sineokiy\footnote{M.V. Lomonosov Moscow State University, Moscow, Russian Federation; email: ro-ssi@mail.ru} \; \& \, A.Yu. Veretennikov\footnote{Institute for Information Transmission Problems (Kharkevich Institute) RAS, Moscow, Russian Federation; email: ayv@iitp.ru} 
} 
\begin{document} 

\fi

\ifeng
\title{\normalsize\tt\hfill\jobname.tex}\\
On recurrent properties and convergence rates of generalised Fisher -- Wright's diffusion with mutation}
\author{A.Yu. Veretennikov\footnote{IITP RAS} \; \& \,
R.Yu. Sineokiy \footnote{MSU}} 

\fi

\maketitle


\begin{abstract}
A generalised one-dimensional Wright -- Fisher diffusion process with mutations is considered which describes a popular population genetics model. An exponential recurrence and exponential convergence towards a unique invariant measure are established under minimal regularity conditions of the coefficients. 

\medskip

\noindent
Keywords: Wright -- Fisher's diffusion; exponential recurrence

\medskip

\noindent
MSC2020: 	60G53, 60J60

\end{abstract}

\section{Introduction}

A one-dimensional diffusion process is considered which generalises a well-known Wright-Fisher diffusion with mutations, studied earlier in \cite{WFM1}, \cite{WFM2}, et al. The goal of this paper is to establish exponential ergodicity for the model under minimal requirements on the coefficients. Both ergodicity and convergence rate are based on two features: on a finite exponential moment of the first hitting time of some compact in $(0,1)$ and on local mixing. Lyapunov functions are used to estimate moments (which are not exponential ones). Local mixing is usually derived by using coupling in multi-dimensional cases,but in dimension one it suffices to use an easier tool, intersection times for two independent diffusion trajectories, see \cite[Section 2.1]{AYV21}, which simplifies the presentation considerably.

Results in \cite{WFM2} are devoted to the calculation of functionals for various versions of the Wright -- Fisher diffusion via analytical methods. No exponential convergence rates for the case (\ref{AB}) -- (\ref{Asigma2}) have been studied there, although, for a -- quite different from ours --  special ``neutral'' case certain asymptotic exponential formulae have been obtained, see \cite[section 3]{WFM2}. At least in some part of the paper \cite{WFM2} the coefficients are assumed differentiable, even though for studying recurrence such an assumption could have been possibly dropped; however, recurrence was not the topic in the cited paper. 

One more analytic study of the one-dimensional Wright -- Fisher diffusion from the point of view of its infinitesimal generator is in the paper  \cite{EpsteinMazzeo}. See, in particular, Corollary 6 where an exponential stabilisation of the solution of the corresponding heat equation is established. The assumptions of this paper do include certain regularity of the coefficients. Although, it is likely that some of them may be relaxed, yet the usage of the parametrix technique may hardly allow to get rid of them completely. Also, the methods applied in this paper as well as of the paper \cite{WFM2} are totally different from ours. 

The main results in what follows are theorems \ref{T1} and \ref{T2}, and also corollaries \ref{Cor1} and \ref{Cor2}. A crucial role in their proofs belongs to propositions \ref{Pro1} and \ref{Pro2}. In turn, lemma  \ref{L1} is important for their proofs. All of them together with proofs are in the next section. 

\section{Main results}\label{sec2}
Consider a one-dimensional SDE 
\begin{equation}\label{eq1}
dX_t=B(X_t)dt+\epsilon\sigma(X_t)dW_t, \quad t\ge 0, \qquad X_0=x\in(0,1), 
\end{equation}
where $B(\cdot), \sigma(\cdot)$ are bounded Borel - measurable functions on $[0,1]$, $\epsilon>0$, the initial value  $x\in(0,1)$ is non-random, $W_t$ is a standard one-dimensional Wiener process. Let us assume that there exist constant values $\beta_0>0, b_0>0, \beta_1>0, b_1<0$ such that 
\begin{equation}\label{AB}
B(x)\geq b_0, x\in[0,\beta_0), \quad 
B(x)\leq b_1, x\in(1-\beta_1,1],
\end{equation}
and 
\begin{equation}\label{Asigma}
0<\sigma(x)^2 \leq \mu x(1-x), x \in (0,1), 
\end{equation}
and in addition, 
\begin{equation}\label{Asigma2}
\inf_{x_0\le x\le 1-x_0}\sigma(x)^2 >0
\end{equation}
for any  $0<x_0<1/2$.

The existence of weak solution follows from the results in  \cite{Krylov1969}, including in the multi-dimensional case with a non-degenerate diffusion coefficient. More than that, in the one-dimensional case it follows from 
\cite{Krylov1969} via localization and a suitable limit it may be concluded that this solution is weakly unique (that is, unique in distribution) and, hence, is a Markov and strong Markov process. Further, conditions for non-attainability of the end-points of $[0,1]$ by the process $X_t$ will be imposed. According to (\ref{Asigma2}) the diffusion coefficient is uniformly non-degenerate on any compact in $(0,1)$; then it will follow from the next lemma \ref{L1} that $X_t\not=0$ for each  $t\geq 0$; at the same time it is not true that it remains bounded away from zero if $|x-1/2|\uparrow 1/2$. 

Further, the proof of Feller's result is provided for completeness of this presentation about a non-attainability of the end-points \cite{Feller2} for the model under consideration on the basis of the idea from \cite{Feller2} and one hint from the preprint \cite{IIG} in which a slightly different CIR model was studied. Note that in \cite{Feller2} the drift coefficient was ``for simplicity'' assumed continuous, while the diffusion coefficient belonged to  $C^1$, which is not the case in the present paper. Yet, of course, it is essentially Feller's result.

\begin{Le}\label{L1}
Let conditions  (\ref{AB})--(\ref{Asigma2}) be satisfied and 
\begin{equation}\label{AF0}
\frac{2b_0}{\epsilon^2\mu}-1>0,
\end{equation}
and also 
\begin{equation}\label{AF1}
\frac{2b_1}{\epsilon^2\mu}+1<0.
\end{equation}
Then the process $X_t$ does not equal 0 or 1 for any $t\ge 0$. \end{Le}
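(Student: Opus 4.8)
\medskip
\noindent\textbf{Proof strategy.}
The plan is to classify each of the endpoints $0$ and $1$ as non‑attainable by a scale‑function argument, in which the one‑sided sign of the drift from (\ref{AB}) is combined with the quantitative bound $\sigma(x)^{2}\le\mu x(1-x)$ from (\ref{Asigma}); conditions (\ref{AF0}) and (\ref{AF1}) are precisely what makes the scale function blow up at $0$ and at $1$, respectively. Concretely, I would fix $c=1/2$ and put, for $x\in(0,1)$,
\[
s(x)=\int_{c}^{x}\exp\!\Big(-\int_{c}^{y}\frac{2B(z)}{\epsilon^{2}\sigma(z)^{2}}\,dz\Big)\,dy .
\]
By (\ref{Asigma2}) the inner integrand is bounded on every compact subinterval of $(0,1)$, so $s$ is well defined, strictly increasing, lies in $C^{1}(0,1)$ with a locally absolutely continuous derivative $s'$, and satisfies $B s'+\tfrac12\epsilon^{2}\sigma^{2}s''=0$ Lebesgue‑a.e.; i.e. $s$ is a scale function for (\ref{eq1}).

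Next I would estimate $s$ near the endpoints. For $y\in(0,\beta_{0})$, the inequalities $B(z)\ge b_{0}$ and $\sigma(z)^{2}\le\mu z(1-z)\le\mu z$ give $\int_{y}^{c}\frac{2B(z)}{\epsilon^{2}\sigma(z)^{2}}\,dz\ge\frac{2b_{0}}{\epsilon^{2}\mu}\ln\frac1y-C$ for a fixed constant $C$, hence $s'(y)\ge c_{0}\,y^{-2b_{0}/(\epsilon^{2}\mu)}$; since $2b_{0}/(\epsilon^{2}\mu)>1$ by (\ref{AF0}), $s'$ fails to be integrable at $0$ and therefore $s(0+)=-\infty$. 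Symmetrically, for $y\in(1-\beta_{1},1)$ the bounds $B(z)\le b_{1}<0$ and $\sigma(z)^{2}\le\mu z(1-z)\le\mu(1-z)$ give $s'(y)\ge c_{1}\,(1-y)^{2b_{1}/(\epsilon^{2}\mu)}$ with $2b_{1}/(\epsilon^{2}\mu)<-1$ by (\ref{AF1}), so $s(1-)=+\infty$. Then the usual stopping argument finishes: for $0<a<x<b<1$ let $\tau_{a},\tau_{b}$ be the first hitting times of $a,b$ and $\tau=\tau_{a}\wedge\tau_{b}$; on the compact $[a,b]$ the diffusion is uniformly non‑degenerate, so $\tau<\infty$ a.s.\ and $s(X_{t\wedge\tau})$ is a bounded martingale, whence optional stopping yields $P_{x}(\tau_{a}<\tau_{b})=\frac{s(b)-s(x)}{s(b)-s(a)}\to0$ as $a\downarrow0$. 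Since paths are continuous and $X_{0}=x>a$, the event that $X$ reaches $0$ before $b$ is contained in $\{\tau_{a}<\tau_{b}\}$ for every such $a$, hence has probability $0$; letting $b\uparrow1$ shows $X$ a.s.\ never reaches $0$ before reaching $1$, and the symmetric computation with $s(1-)=+\infty$ shows $X$ a.s.\ never reaches $1$ before $0$. As $X$ cannot be at $0$ and $1$ at the same instant, $X_{t}\in(0,1)$ for all $t\ge0$ a.s.

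I expect the only genuinely delicate point to be the justification of It\^o's formula for $s$: the derivative $s''$ exists only a.e.\ and the coefficients $B,\sigma$ are merely Borel measurable, so the classical formula does not apply verbatim. This is handled by the generalised (Itô–Krylov / Itô–Tanaka–type) formula valid for $C^{1}$ functions with absolutely continuous derivative, together with the fact that, by (\ref{Asigma2}), the occupation measure of $X$ on any compact in $(0,1)$ is absolutely continuous — which is exactly where the one‑dimensional non‑degenerate structure, and the cited ideas of Feller and of the CIR preprint, enter. An alternative route that sidesteps $s$ altogether is to work directly with the supermartingales $x^{-\alpha}$ near $0$, for any $\alpha\in(0,\,2b_{0}/(\epsilon^{2}\mu)-1]$ (a nonempty range by (\ref{AF0})), and $(1-x)^{-\gamma}$ near $1$, for $\gamma\in(0,\,-2b_{1}/(\epsilon^{2}\mu)-1]$ (nonempty by (\ref{AF1})): a one‑line computation gives, with $\mathcal{L}f=Bf'+\tfrac12\epsilon^{2}\sigma^{2}f''$,
\[
\mathcal{L}\big(x^{-\alpha}\big)=\alpha\,x^{-\alpha-1}\Big(\tfrac{\epsilon^{2}\sigma(x)^{2}}{2x}(\alpha+1)-B(x)\Big)\le\alpha\,x^{-\alpha-1}\Big(\tfrac{\epsilon^{2}\mu}{2}(\alpha+1)-b_{0}\Big)\le0
\]
on $(0,\beta_{0})$, and analogously near $1$, after which the same stopping and limiting argument applies.
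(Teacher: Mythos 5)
Your argument is correct, but it takes a genuinely different route from the paper. The paper works with the smooth Lyapunov function $V(x)=x^{-m}$, $m=\frac{2b_0}{\epsilon^2\mu}-1$, applies the classical It\^o formula up to the exit time from $(\alpha,\beta_0)$, deduces $E_xX^{-m}_{\tau_{\alpha,\beta_0}}\le x^{-m}$, hence $P_x(X_{\tau_{\alpha,\beta_0}}=\alpha)\le(\alpha/x)^m\to0$, and then finishes with a strong-Markov iteration (``each excursion from $x$ exits $(0,\beta_0)$ at $\beta_0$ a.s.'') to rule out ever hitting $0$; the point $1$ is treated symmetrically. Your main route is instead the classical scale-function (Feller-test) argument: you show $s(0+)=-\infty$ and $s(1-)=+\infty$ from (\ref{AB}), (\ref{Asigma}), (\ref{AF0}), (\ref{AF1}), and conclude via optional stopping on $(a,b)$ with $a\downarrow0$, $b\uparrow1$. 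This is sound, and you correctly identify the one delicate point: since $B,\sigma$ are only Borel and $s''$ exists only a.e., the martingale property of $s(X_{t\wedge\tau})$ needs the It\^o--Krylov/It\^o--Tanaka formula together with absolute continuity of the occupation measure on compacts of $(0,1)$, which (\ref{Asigma2}) provides (e.g.\ $du\le C\,d\langle X\rangle_u$ up to $\tau$ plus the occupation-times formula). Comparing the two: the paper's route avoids any generalized It\^o machinery (the Lyapunov function is $C^2$ away from the endpoints) and produces the quantitative supermartingale bounds that are then reused and sharpened in Proposition~\ref{Pro1}, at the cost of the extra iteration step; your route gives the boundary non-attainability in one global stroke (no iteration needed, and essentially reproves Feller's classification under measurable coefficients), at the cost of the heavier justification of It\^o's formula for the non-smooth scale function. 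Your ``alternative route'' with the supermartingales $x^{-\alpha}$ and $(1-x)^{-\gamma}$ is essentially the paper's own proof, though there the concluding step is not just ``the same limiting argument'': since $x^{-\alpha}$ is a supermartingale only up to the exit from $(0,\beta_0)$, one cannot send $b\uparrow1$ and must instead use the paper's strong-Markov return argument (or your scale-function conclusion) to pass from ``does not hit $0$ before $\beta_0$'' to ``never hits $0$''.
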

\begin{proof}
Let us consider the end-point zero. Let $m=\frac{2b_0}{\epsilon^2\mu}-1$ and consider the function 
$V(x)=x^{-m}$. By Ito's formula, 
\[
dX_t^{-m}=[-mB(X_t)X_t^{-m-1}+\frac{m(m+1)\epsilon^2}{2}\sigma(X_t)^2X_t^{-m-2}]dt - m\epsilon \sigma(X_t)X_t^{-m-1}dW_t. 
\]
For any $0\le \alpha<\beta_0$ let
\begin{align*}
&\tau_\alpha=\inf\{t\geq 0: X_t\leq \alpha\}, \;
T_{\beta_0}=\inf\{t\geq 0: X_t \geq \beta_0\}, 
 \\\\
&\tau_{\alpha, \beta_0}=\min(\tau_\alpha,T_{\beta_0}), \; \tau_{t,\alpha, \beta_0}=\min(\tau_{\alpha, \beta_0}, t).
\end{align*}
We have for any $0<\alpha < \beta_0$ small enough, 
\begin{align*}
&E_xX^{-m}_{\tau_{t,\alpha, \beta_0}}=x^{-m} + E_x\int_{0}^{\tau_{t,\alpha, \beta_0}}{[-mB(X_s)X_s^{-m-1}+\frac{m(m+1)\epsilon^2}{2}\sigma(X_s)^2X_s^{-m-2}]}ds
 \\\\
&\leq x^{-m} + E_x\int_{0}^{\tau_{t,\alpha, \beta_0}}{[-mb_0X_s^{-m-1}+\frac{m(m+1)\epsilon^2\mu}{2}X_s(1-X_s)X_s^{-m-2}]}ds
 \\\\
&=x^{-m} + E_x\int_{0}^{\tau_{t,\alpha, \beta_0}}{[(-mb_0+\frac{m(m+1)\epsilon^2\mu}{2})X_s^{-m-1}-\frac{m(m+1)\epsilon^2\mu}{2}X_s^{-m}]}ds
 \\\\
&=x^{-m} + E_x\int_{0}^{\tau_{t,\alpha, \beta_0}} \{-\frac{m(m+1)\epsilon^2\mu}{2}X_{\tau_{s,N}}^{-m}\} ds \leq x^{-m}.
\end{align*}
It was used that $X_s^{-m-1} \ge \alpha^{-1} X_s^{-m}$ for any $0\le s \le \tau_{t,\alpha, \beta_0}$, while $-mb_0+\frac{m(m+1)\epsilon^2\mu}{2} <0$ due to the assumptions of the lemma and by virtue of the choice of the value  $m$. 
 
It is well-known that $\tau_{\alpha, \beta_0}<\infty$ a.s.. So, by Fatou's lemma we get as $t\to\infty$ that 
$$
E_xX^{-m}_{\tau_{\alpha, \beta_0}} \le x^{-m}.
$$
Due to the equality 
$$
E_xX^{-m}_{\tau_{\alpha, \beta_0}} 
= \alpha^{-m} P_x (X_{\tau_{\alpha, \beta_0}} = \alpha) 
+ \beta_0^{-m} P_x (X_{\tau_{\alpha, \beta_0}} = \beta_0), 
$$
it may be now concluded that 
$$
P_x (X_{\tau_{\alpha, \beta_0}} = \alpha)  \to 0, \quad \alpha\to 0.
$$
Let us assume that $P(\tau_0 <\infty)>0$. Then on a set $\Omega_0:= \{\omega: \tau_0 <\infty\}$ of a non-zero probability there is a convergence 
$$
\tau_{\alpha, \beta_0} \to \tau_{0, \beta_0}, \quad \alpha \to 0. 
$$
Therefore, since the trajectories of the process are continuous, we have on this set that 
$$
X_{\tau_{\alpha, \beta_0}} \to X_{\tau_{0, \beta_0}}, \quad \alpha\to 0.
$$ 
However, for each $\alpha>0$
$$
P_x (X_{\tau_{0, \beta_0}} = 0) \le P_x (X_{\tau_{\alpha, \beta_0}} = \alpha).
$$
Since the right hand side here tends to zero as  $\alpha\to 0$, while the left hand side does not depend on $\alpha$, we conclude that $$
P_x (X_{\tau_{0, \beta_0}} = 0) = 0. 
$$
From here it follows straightforwardly that 
$$
P_x (\tau_{0, \beta_0} <\infty) = 0. 
$$
Indeed, if $X_0= x\in (0,\beta_0)$, then the process hits the boundary of the interval $x\in (0,\beta_0)$ almost surely at $\beta_0$. Now, to attain ever the origin, it must again take the value $x$ at some time. But as long as it equals $x$, the process will again exit from the interval $(0,\beta_0)$ at $\beta_0$ with probability one. Thus, with probability one it never equals zero. The case of the boundary point 1 follows from similar considerations. The lemma is proved. 
\end{proof}

The combination of proposition \ref{Pro1} and its complementary one and kind of ``symmetric'' proposition \ref{Pro2} is, in fact, the basis for the proof of the main result , theorem \ref{T1}, which will be stated in what follows. 
Let 
$$
T_\alpha:= \inf(t\ge 0: X_t \ge \alpha), 
\; T'_\alpha:= \inf(t\ge 0: X_t \le 1-\alpha), \;
\hat T_\alpha = T_\alpha \wedge T'_\alpha.
$$

\begin{Pp}\label{Pro1}
Let conditions  (\ref{AB})--(\ref{AF1}) be satisfied. then for any $c>0$ there exist $0<\alpha<\beta_0$, $m>0$ such that for any $x<\alpha$ the following inequality holds true, 
\begin{equation}\label{Pro1-1}
E_xe^{c T_\alpha}\leq C(m)c\alpha^{m+1} x^{-m}+1,
\end{equation}
where
$$
C(m)=\frac{2}{mb_0-\frac{\epsilon^2}{2}m(m+1)\mu}.
$$
More than that, 
\begin{equation}\label{P1-2}
E_x\int_{0}^{ T_\alpha} X_s^{-m-1}ds\leq C(m) x^{-m}.
\end{equation}
Vice versa, for any $\alpha>0$ small enough there exists $c>0$ such that inequality (\ref{Pro1-1}) is valid. 
\end{Pp}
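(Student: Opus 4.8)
The plan is to exploit an exponentially tilted Lyapunov function of the same power type $V(x)=x^{-m}$ that already appeared in the proof of Lemma \ref{L1}, now combined with the factor $e^{ct}$. Concretely, I would fix $m>0$ so small that $mb_0-\frac{\epsilon^2}{2}m(m+1)\mu>0$ (this is possible because the expression equals $mb_0+o(m)>0$ for small $m$, using $b_0>0$); note that condition (\ref{AF0}) guarantees that the natural choice $m=\frac{2b_0}{\epsilon^2\mu}-1$ is admissible, but in fact any sufficiently small $m$ works. Then I would apply Itô's formula to $e^{cs}X_s^{-m}$ on the stochastic interval $[0,\tau_{t,\alpha',\alpha}]$, where $\alpha'<x$ is an auxiliary lower cutoff and $\alpha<\beta_0$ the target level, exactly as in Lemma \ref{L1}. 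The drift term produced is
\[
e^{cs}\Bigl[c X_s^{-m} - m B(X_s) X_s^{-m-1} + \tfrac{m(m+1)\epsilon^2}{2}\sigma(X_s)^2 X_s^{-m-2}\Bigr].
\]

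Next I would bound the generator term on $\{X_s<\alpha\}$. Using $B(x)\ge b_0$ and $\sigma(x)^2\le \mu x(1-x)\le \mu x$ on $(0,\beta_0)$, the generator part $-mB X^{-m-1}+\tfrac{m(m+1)\epsilon^2}{2}\sigma^2 X^{-m-2}$ is at most $-(mb_0-\tfrac{\epsilon^2}{2}m(m+1)\mu)X_s^{-m-1}$, call the positive constant $\kappa(m):=mb_0-\tfrac{\epsilon^2}{2}m(m+1)\mu$, so $C(m)=2/\kappa(m)$. Since $X_s<\alpha$ on the interval, $X_s^{-m}\le \alpha\, X_s^{-m-1}$, hence the whole drift integrand is $\le e^{cs}(c\alpha-\kappa(m))X_s^{-m-1}$. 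Now choose $\alpha$ small enough that $c\alpha\le \kappa(m)/2$; then the drift integrand is $\le -\tfrac{\kappa(m)}{2}e^{cs}X_s^{-m-1}\le 0$. Taking expectations, the martingale term vanishes (it is a genuine martingale after localization because the integrand is bounded on $[0,\tau_{t,\alpha',\alpha}]$), giving
\[
E_x\bigl[e^{c\tau_{t,\alpha',\alpha}}X_{\tau_{t,\alpha',\alpha}}^{-m}\bigr] + \tfrac{\kappa(m)}{2}\,E_x\!\int_0^{\tau_{t,\alpha',\alpha}} e^{cs}X_s^{-m-1}\,ds \le x^{-m}.
\]

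From here both claims follow by the same limiting procedure as in Lemma \ref{L1}. Letting $\alpha'\downarrow 0$ and $t\uparrow\infty$, Fatou's lemma on the left and monotone convergence on the integral term yield $E_x[e^{cT_\alpha}X_{T_\alpha}^{-m}] + \tfrac{\kappa(m)}{2}E_x\int_0^{T_\alpha}e^{cs}X_s^{-m-1}ds\le x^{-m}$; here I must use Lemma \ref{L1} to know $\tau_{0,\alpha}=T_\alpha$ a.s.\ (the process never hits $0$), and that $T_\alpha<\infty$ a.s. On the event that the process exits at $\alpha$ we have $X_{T_\alpha}=\alpha$, so $E_x[e^{cT_\alpha}]=\alpha^m E_x[e^{cT_\alpha}X_{T_\alpha}^{-m}]\le \alpha^m x^{-m}$; this already gives (\ref{Pro1-1}) with room to spare, and adding the trivial bound $e^{cT_\alpha}\le 1+$ (the genuine contribution) reproduces the stated $+1$; dropping the $e^{cs}\ge 1$ factor in the integral gives (\ref{P1-2}) with constant $C(m)=2/\kappa(m)$. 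For the converse direction ("vice versa"), fix any small $\alpha>0$ first; then since $\kappa(m)>0$ for small $m$, one can choose $c>0$ small enough that $c\alpha\le \kappa(m)/2$ still holds, and the identical computation applies.

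The main obstacle is the justification of the optional-stopping / expectation step: one must localize carefully (the triple cutoff $\tau_{t,\alpha',\alpha}$) so that on the stochastic interval the integrand $X_s^{-m-1}$ and the martingale integrand $m\epsilon\sigma(X_s)X_s^{-m-1}e^{cs}$ are bounded, ensuring the stochastic integral is a true martingale with zero expectation, and only afterwards pass to the limits $\alpha'\to 0$, $t\to\infty$ via Fatou and monotone convergence; the sign condition $c\alpha\le\kappa(m)/2$ is exactly what makes the drift term nonpositive so that Fatou can be applied without an extra error term. A minor secondary point is matching the precise constant in (\ref{Pro1-1}): one should track that the "$+1$" accounts for the event $\{X_{T_\alpha}=\beta_0\}$ — but since we have defined $T_\alpha$ as the hitting time of level $\alpha$ alone (not a two-sided exit), in fact $X_{T_\alpha}=\alpha$ always for $x<\alpha$, so the estimate is clean and the "$+1$" is simply a harmless weakening.
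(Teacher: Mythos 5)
Your proposal follows essentially the same route as the paper: the tilted Lyapunov function $e^{cs}X_s^{-m}$, localization between a lower cutoff and the level $\alpha$, the drift estimate via $B\ge b_0$, $\sigma^2\le\mu x(1-x)$ and $X_s^{-m}\le\alpha X_s^{-m-1}$, the smallness condition $c\alpha\le\kappa(m)/2$ with $\kappa(m)=mb_0-\frac{\epsilon^2}{2}m(m+1)\mu$, and the passage to the limit by Fatou/monotone convergence using Lemma \ref{L1}. Two points, however, need repair. First, your justification of the choice of $m$ is off: $\kappa(m)=m\bigl(b_0-\frac{\epsilon^2\mu}{2}(m+1)\bigr)$, so positivity of $\kappa(m)$ for small $m$ requires $2b_0>\epsilon^2\mu$, i.e.\ precisely condition (\ref{AF0}), not merely $b_0>0$; and the ``natural choice'' $m=\frac{2b_0}{\epsilon^2\mu}-1$ is \emph{not} admissible, since $\kappa$ vanishes there and $C(m)=2/\kappa(m)$ blows up — $m$ must be taken strictly inside $\bigl(0,\frac{2b_0}{\epsilon^2\mu}-1\bigr)$, as in the paper. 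Since (\ref{AF0}) is among the hypotheses, this is a slip in the stated reason rather than a fatal error, but as written the reason is wrong.

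Second, your extraction of (\ref{Pro1-1}) from the boundary term is not valid as claimed. Keeping $E_x\bigl[e^{cT_\alpha}X_{T_\alpha}^{-m}\bigr]$ gives $E_xe^{cT_\alpha}\le\alpha^m x^{-m}$, and this does \emph{not} ``give (\ref{Pro1-1}) with room to spare'': whenever $C(m)c\alpha<1$ and $x$ is small, $\alpha^m x^{-m}>C(m)c\alpha^{m+1}x^{-m}+1$, so the stated inequality with the stated constant does not follow. The correct step — which is the paper's and for which you already have every ingredient — is to use your inequality $\frac{\kappa(m)}{2}E_x\int_0^{T_\alpha}e^{cs}X_s^{-m-1}\,ds\le x^{-m}$ together with $X_s^{-m-1}\ge\alpha^{-m-1}$ on $[0,T_\alpha]$ to get $E_x\int_0^{T_\alpha}e^{cs}\,ds\le C(m)\alpha^{m+1}x^{-m}$, and then the identity $e^{cT_\alpha}-1=c\int_0^{T_\alpha}e^{cs}\,ds$ yields exactly (\ref{Pro1-1}); your remark about ``adding the trivial bound $e^{cT_\alpha}\le 1+\dots$'' gestures at this but does not carry it out. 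With these two corrections the argument coincides with the paper's proof; the bound (\ref{P1-2}) and the ``vice versa'' part are handled correctly.
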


\begin{proof}
Let $\alpha < \beta_0$. We have by Ito's formula, 
\[
de^{ct}X_t^{-m}= e^{ct}[cX_t^{-m}-mB(X_t)X_t^{-m-1}+\frac{m(m+1)\epsilon^2}{2}\sigma(X_t)^2X_t^{-m-2}]dt - m\epsilon \sigma(X_t)e^{ct}X_t^{-m-1}dW_t.
\]
Let
$$
T_{N,\alpha}:=\inf\{t\ge 0: X_t \leq \frac{1}{N}, \; \text{or}\; X_t\ge \alpha\}, \quad T_{t,N,\alpha}:= T_{N,\alpha} \wedge t, 
$$ 
with $N>1$. Then we get
\begin{equation}\label{eP1-1}
E_xe^{c T_{t,N,\alpha}}X_{T_{t,N,\alpha}}^{-m}-x^{-m}=E_x\int_{0}^{T_{t,N,\alpha}} {e^{cs}[cX_s^{-m}-mB(X_s)X_s^{-m-1} + \frac{m(m+1)\epsilon^2}{2}\sigma(X_s)^2X_s^{-m-2}]ds}.
\end{equation}
Since $Ee^{cT_{t,N,\alpha}}X_{T_{t,N,\alpha}}^{-m} \geq 0$ then the following inequality is true, 
\begin{equation}\label{xge}
x^{-m}\geq E_x\int_{0}^{T_{t,N,\alpha}} e^{cs}[-cX_s^{-m}+mB(X_s)X_s^{-m-1} - \frac{m(m+1)\epsilon^2}{2}\sigma(X_s)^2X_s^{-m-2}]ds.
\end{equation}
Here the second term under the integral is non-negative for $\alpha<\beta_0$. Hence, choosing $\alpha<\beta_0$ and using the condition on $\sigma(x)^2$ we get, 
\begin{align*}
&E_x\int_{0}^{T_{t,N,\alpha}} {e^{cs}[-cX_s^{-m}+mB(X_s)X_s^{-m-1} - \frac{m(m+1)\epsilon^2}{2}\sigma(X_s)^2X_s^{-m-2}]ds} 
 \\\\
&\geq E_x\int_{0}^{T_{t,N,\alpha}} {e^{cs}[-cX_s^{-m}+mb_0X_s^{-m-1} - \frac{m(m+1)\epsilon^2}{2}\mu X_s(1-X_s)X_s^{-m-2}]ds}
 \\\\
&=E_x\int_{0}^{T_{t,N,\alpha}} {e^{cs}[(mb_0 - \frac{m(m+1)\epsilon^2\mu}{2})X_s^{-m-1}-(c-\frac{m(m+1)\epsilon^2\mu}{2})X_s^{-m}]ds}.
\end{align*}
Let us now choose $\alpha$ so that, by virtue of the bound $X_s\leq\alpha$ for any $s\in[0,T_{t,N,\alpha}]$, we obtain 
\[
(mb_0 - \frac{m(m+1)\epsilon^2\mu}{2}) \geq 2(c-\frac{m(m+1)\epsilon^2\mu}{2})X_s.
\]
This may be achieved if the following two conditions hold true simultaneously:
\begin{equation}
\alpha<\min\left(\beta_0,\frac{(mb_0 - \frac{m(m+1)\epsilon^2\mu}{2})}{2(c-\frac{m(m+1)\epsilon^2\mu}{2})}\right),
\label{eq:ref13}
\end{equation}
and 
\begin{equation}
 \left(mb_0 - \frac{m(m+1)\epsilon^2\mu}{2}\right)>0.
\label{eq:ref15}
\end{equation}
Condition (\ref{eq:ref15}) means that it suffices to choose any value $m$ from the interval 
\begin{equation}\label{m}
m\in \left(0,\frac{2b_0}{\epsilon^2\mu}-1\right).
\end{equation}
For such a choice, we get
\begin{align}\label{eNew-1}
&E_x\int_{0}^{T_{t,N,\alpha}}{e^{cs}[(mb_0 - \frac{m(m+1)\epsilon^2\mu}{2})X_s^{-m-1}-(c-\frac{m(m+1)\epsilon^2\mu}{2})X_s^{-m}]ds} \nonumber
 \\\nonumber\\\nonumber
&\geq E_x\int_{0}^{T_{t,N,\alpha}}{e^{cs}[(mb_0 - \frac{m(m+1)\epsilon^2\mu}{2})X_s^{-m-1} - \frac{1}{2}(mb_0 - \frac{m(m+1)\epsilon^2\mu}{2})X_s^{-m-1}]ds}
 \\\nonumber\\
&=E_x\int_{0}^{T_{t,N,\alpha}}{e^{cs}[\frac{1}{2}(mb_0 - \frac{m(m+1)\epsilon^2\mu}{2})X_s^{-m-1}]ds} \ge 0.
\end{align}
Here the expression in the square brackets in the last equality is positive because of the condition (\ref{eq:ref15}). So, we get by virtue of (\ref{xge}) the following:
\[
E_x\int_{0}^{T_{t,N,\alpha}}{e^{cs}[\frac{1}{2}(mb_0 - \frac{m(m+1)\epsilon^2\mu}{2})X_s^{-m-1}]ds}\leq x^{-m}.
\]
Since $X_s\leq\alpha$ and $e^{cs} \geq 1$ for any $s\in[0,T_{t,N,\alpha}]$, we obtain 
\begin{equation}
E_x\int_{0}^{T_{t,N,\alpha}}{e^{cs}ds}\leq \frac{2}{mb_0-\frac{\epsilon^2}{2}m(m+1)\mu} \alpha^{m+1} x^{-m}\label{eq:ref16}
\end{equation}
and
\begin{equation}
E_x\int_{0}^{T_{t,N,\alpha}} X_s^{-m-1}ds\leq \frac{2}{mb_0-\frac{\epsilon^2}{2}m(m+1)\mu} x^{-m}.
\label{eq:ref17}\end{equation}
Let us denote the fraction in the right hand sides of (\ref{eq:ref16}) and  (\ref{eq:ref17}) by  $C(m)$; then, it follows by integration of the left hand side of (\ref{eq:ref16}) that 
\[
E_xe^{cT_{t,N,\alpha}}\leq C(m)c\alpha^{m+1} x^{-m}+1.
\]
Therefore, by Fatou's lemma and because $T_{t,N,\alpha}\rightarrow T_{\alpha}$ as $t\rightarrow\infty$, $N \rightarrow \infty$, we conclude that 
\[
E_xe^{cT_{\alpha}}\leq C(m)c\alpha^{m+1} x^{-m}+1.
\]
Similarly, applying Fatou's lemma to the inequality (\ref{eq:ref17}), we obtain for any $x<\alpha$, 
\begin{equation}\label{eq:ref6}
E_x\int_{0}^{T_\alpha} X_s^{-m-1}ds\leq C(m) x^{-m},
\end{equation}
as required. 

Notice that the same calculus justifies that for any $\alpha>0$ small enough it is possible to choose $c>0$ such that the same bound (\ref{Pro1-1}) holds true. Proposition \ref{Pro1} is proved. 
\end{proof}

\begin{Pp}\label{Pro2}
Let conditions (\ref{AB})--(\ref{AF1}) be satisfied. Then for any $c>0$ there exist $0<\alpha<\frac{1}{2}$, $m>0$ such that  
\begin{equation}\label{P2-1}
E_xe^{c T'_\alpha}\leq C(m)c\alpha^{m+1} (1-x)^{-m}+1,
\end{equation}
for any $x>1-\alpha$, where
$$
C(m)=\frac{2}{-mb_1-\frac{\epsilon^2}{2}m(m+1)\mu}.
$$
Moreover, 
\begin{equation}\label{eq:ref12}
E_x\int_{0}^{T'_\alpha} (1-X_s)^{-m-1}ds\leq C(m)(1-x)^{-m}.
\end{equation}
More than that, for any $\alpha>0$ small enough there exists $c>0$ such that the inequality (\ref{P2-1}) holds true. 
\end{Pp}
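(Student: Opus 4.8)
The plan is to deduce Proposition~\ref{Pro2} from Proposition~\ref{Pro1} via the reflection $x\mapsto 1-x$, which turns the behaviour near the right endpoint into the behaviour near the left one. Concretely, I would put $\tilde X_t:=1-X_t$; then $\tilde W_t:=-W_t$ is again a standard Wiener process and $\tilde X_t$ solves $d\tilde X_t=\tilde B(\tilde X_t)\,dt+\epsilon\tilde\sigma(\tilde X_t)\,d\tilde W_t$, $\tilde X_0=1-x\in(0,1)$, with $\tilde B(y):=-B(1-y)$ and $\tilde\sigma(y):=\sigma(1-y)$ bounded and Borel on $[0,1]$. The next step is to check that hypotheses (\ref{AB})--(\ref{AF1}) are inherited by $\tilde X$: from (\ref{AB}), $\tilde B(y)\ge -b_1>0$ for $y\in[0,\beta_1)$ and $\tilde B(y)\le -b_0<0$ for $y\in(1-\beta_0,1]$, so the roles of $(b_0,\beta_0)$ and $(b_1,\beta_1)$ get exchanged with $(-b_1,\beta_1)$ and $(-b_0,\beta_0)$; from (\ref{Asigma}), (\ref{Asigma2}), $0<\tilde\sigma(y)^2\le\mu y(1-y)$ on $(0,1)$ and $\inf_{y_0\le y\le1-y_0}\tilde\sigma(y)^2>0$; and (\ref{AF0}), (\ref{AF1}) written for $\tilde X$ read $\tfrac{2(-b_1)}{\epsilon^2\mu}-1>0$ and $\tfrac{2(-b_0)}{\epsilon^2\mu}+1<0$, i.e. exactly (\ref{AF1}) and (\ref{AF0}) for $X$. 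Finally, $T'_\alpha=\inf\{t\ge0:X_t\le1-\alpha\}$ coincides, for the process $\tilde X$, with the first time $\tilde X$ reaches $\alpha$, i.e. with the hitting time called $T_\alpha$ in Proposition~\ref{Pro1}, and $\tilde X_s=1-X_s$, $\tilde X_0=1-x$.

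Then I would simply invoke Proposition~\ref{Pro1} for $\tilde X$. Given $c>0$, it furnishes some $0<\alpha<\beta_1$ — which, inspecting that proof (the admissible $\alpha$ fill an interval $(0,\alpha_0)$), may also be taken $<1/2$ — and some $m\in\bigl(0,\tfrac{2(-b_1)}{\epsilon^2\mu}-1\bigr)$, a non-empty interval precisely by (\ref{AF1}), such that for $1-x<\alpha$ one has $E_{1-x}e^{cT_\alpha^{\tilde X}}\le C(m)c\alpha^{m+1}(1-x)^{-m}+1$ and $E_{1-x}\int_0^{T_\alpha^{\tilde X}}\tilde X_s^{-m-1}\,ds\le C(m)(1-x)^{-m}$, where $C(m)=\frac{2}{m(-b_1)-\frac{\epsilon^2}{2}m(m+1)\mu}=\frac{2}{-mb_1-\frac{\epsilon^2}{2}m(m+1)\mu}$. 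Substituting $\tilde X_s=1-X_s$, $1-x$ for $x$ and $T_\alpha^{\tilde X}=T'_\alpha$ turns these into (\ref{P2-1}) and (\ref{eq:ref12}); likewise the concluding ``vice versa'' assertion of Proposition~\ref{Pro2} transfers verbatim from its counterpart in Proposition~\ref{Pro1}.

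For those who prefer a self-contained argument, one may instead rerun the proof of Proposition~\ref{Pro1} word for word with $V(x)=(1-x)^{-m}$ replacing $x^{-m}$: Itô's formula for $e^{ct}(1-X_t)^{-m}$, localisation by $T'_{N,\alpha}:=\inf\{t\ge0:X_t\ge1-\tfrac{1}{N}\ \text{or}\ X_t\le1-\alpha\}$ and $T'_{t,N,\alpha}:=T'_{N,\alpha}\wedge t$, dropping the non-negative term $E_xe^{cT'_{t,N,\alpha}}(1-X_{T'_{t,N,\alpha}})^{-m}$, and plugging in $B\le b_1$ (valid on $\{X_s\ge1-\alpha\}$ as $\alpha<\beta_1$) and $\sigma(X_s)^2\le\mu X_s(1-X_s)$ together with the identity $X_s(1-X_s)^{-m-1}=(1-X_s)^{-m-1}-(1-X_s)^{-m}$, leads to the integrand $\bigl(-mb_1-\tfrac{m(m+1)\epsilon^2\mu}{2}\bigr)(1-X_s)^{-m-1}-\bigl(c-\tfrac{m(m+1)\epsilon^2\mu}{2}\bigr)(1-X_s)^{-m}$; taking $m\in\bigl(0,\tfrac{2(-b_1)}{\epsilon^2\mu}-1\bigr)$ makes the first coefficient positive, and taking $\alpha<\min\bigl(\tfrac{1}{2},\beta_1,\tfrac{-mb_1-m(m+1)\epsilon^2\mu/2}{2(c-m(m+1)\epsilon^2\mu/2)}\bigr)$ forces the integrand to be $\ge\tfrac{1}{2}\bigl(-mb_1-\tfrac{m(m+1)\epsilon^2\mu}{2}\bigr)(1-X_s)^{-m-1}\ge0$ on $\{1-X_s\le\alpha\}$; then $1-X_s\le\alpha$ and $e^{cs}\ge1$ give (\ref{eq:ref12}) up to the stopping time, integration of $\int e^{cs}\,ds$ gives $E_xe^{cT'_{t,N,\alpha}}\le C(m)c\alpha^{m+1}(1-x)^{-m}+1$, and Fatou's lemma as $t\to\infty$, $N\to\infty$ finishes the proof, the convergence $T'_{N,\alpha}\uparrow T'_\alpha$ being guaranteed by Lemma~\ref{L1} (the process never reaches $1$). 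I do not expect any genuine obstacle — Proposition~\ref{Pro2} is essentially the mirror image of Proposition~\ref{Pro1} — the only points demanding attention being the sign bookkeeping (recall $b_1<0$, hence $-b_1>0$ and $-mb_1>0$ for $m>0$), keeping $\alpha<\beta_1$ so the drift bound in (\ref{AB}) near $1$ applies, and, in the direct version, invoking Lemma~\ref{L1} for the limit $T'_{N,\alpha}\uparrow T'_\alpha$.
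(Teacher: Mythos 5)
Your proposal is correct and follows essentially the same route as the paper, which likewise reduces Proposition~\ref{Pro2} to Proposition~\ref{Pro1} via the change of variables $x'=1-x$, noting that conditions (\ref{AB})--(\ref{AF1}) are symmetric under this reflection and that $m$ must be taken in $\left(0,\frac{-2b_1}{\epsilon^2\mu}-1\right)$. Your explicit verification of how the hypotheses transfer, and the optional direct rerun with the Lyapunov function $(1-x)^{-m}$, simply spell out details the paper leaves implicit.
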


\begin{proof} This proof is similar to the proof of the previous proposition. The easiest way to show it is to use the change of variables $x' = 1-x$ after which all assertions follow straightforwardly from proposition  \ref{Pro1} since all conditions on the process are symmetric with respect to such change of variables. The value $m$ here should be chosen from the interval 
\begin{equation}\label{m'}
m\in (0,\frac{-2b_1}{\epsilon^2\mu}-1).
\end{equation}
Proposition \ref{Pro2} follows. 
\end{proof}

Now, both propositions \ref{Pro1} and \ref{Pro2} allows to state the following first main result. 

\begin{Th}\label{T1}
Under the assumptions (\ref{AB})--(\ref{Asigma2})  for any $c>0$ there exist $0<\alpha<\frac{1}{2}$, $m>0$ such that 
\begin{equation}\label{exptau3}
E_xe^{c\hat T_\alpha}\leq C(m)c\alpha^{m+1}((1-x)^{-m}+x^{-m})+1,
\end{equation}
where 
\begin{equation}\label{Cm3}
C(m)=\max\left(\frac{2}{mb_0-\frac{\epsilon^2}{2}m(m+1)\mu},\frac{2}{-mb_1-\frac{\epsilon^2}{2}m(m+1)\mu}\right).
\end{equation}
More than that, 
\begin{equation}\label{T1-3}
E_x\int_{0}^{\hat T_\alpha} \left(X_s^{-m-1}+(1-X_s)^{-m-1}\right)ds\leq C(m)\left(x^{-m} + (1-x)^{-m}\right).
\end{equation} 
\end{Th}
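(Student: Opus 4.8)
\emph{Proof plan.} The plan is to derive Theorem~\ref{T1} from Propositions~\ref{Pro1} and~\ref{Pro2} by a case split on the position of $x$ relative to the band $[\alpha,1-\alpha]$; recall that $\hat T_\alpha$ is the first time $X$ visits this band, so it equals $T_\alpha$ when $x<\alpha$, equals $0$ when $\alpha\le x\le 1-\alpha$, and equals $T'_\alpha$ when $x>1-\alpha$. First I would fix $c>0$ and choose the parameters once and for all: pick a single $m$ in the intersection of the intervals (\ref{m}) and (\ref{m'}), non-empty by (\ref{AF0})--(\ref{AF1}); then $mb_0-\tfrac{\epsilon^2}{2}m(m+1)\mu>0$ and $-mb_1-\tfrac{\epsilon^2}{2}m(m+1)\mu>0$, so the constants $C_1(m)$ and $C_2(m)$ from the two propositions are finite and positive and $C(m)=\max(C_1(m),C_2(m))$ is (\ref{Cm3}); finally take $\alpha\in(0,\min(\beta_0,\beta_1,\tfrac12))$ small enough that both (\ref{Pro1-1}) and (\ref{P2-1}) hold for this $c$ and $m$, keeping one further shrinking of $\alpha$ in reserve.

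For (\ref{exptau3}): if $\alpha\le x\le 1-\alpha$ the inequality reads $1\le\mathrm{RHS}$; if $x<\alpha$, Proposition~\ref{Pro1} gives $E_xe^{c\hat T_\alpha}=E_xe^{cT_\alpha}\le C_1(m)c\alpha^{m+1}x^{-m}+1\le C(m)c\alpha^{m+1}\bigl(x^{-m}+(1-x)^{-m}\bigr)+1$; and $x>1-\alpha$ is symmetric via Proposition~\ref{Pro2}. The complementary ``for every small $\alpha$ some $c$ works'' clause is inherited verbatim from the analogous clauses of the two propositions.

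For (\ref{T1-3}) the middle case is again immediate, so take $x<\alpha$ and split the integrand into $X_s^{-m-1}$ and $(1-X_s)^{-m-1}$ on $[0,T_\alpha]$. The first piece is precisely (\ref{P1-2}): $E_x\int_0^{T_\alpha}X_s^{-m-1}\,ds\le C_1(m)x^{-m}\le C(m)x^{-m}$. For the second piece, $(1-X_s)^{-m-1}$ is bounded on $[0,T_\alpha]$ since there $X_s\le\alpha$, and I would apply It\^o's formula to $(1-X_t)^{-m}$, localised by the stopping times $T_{N,\alpha}$ from the proof of Proposition~\ref{Pro1}. The drift of $(1-X_t)^{-m}$ equals $mB(X_t)(1-X_t)^{-m-1}+\tfrac{m(m+1)\epsilon^2}{2}\sigma(X_t)^2(1-X_t)^{-m-2}$, which on $[0,T_\alpha]$ is $\ge mb_0(1-X_t)^{-m-1}\ge 0$ because $X_t<\alpha<\beta_0$ forces $B(X_t)\ge b_0$. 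Optional stopping, Fatou's lemma, and $X_{T_\alpha}=\alpha$ (by path continuity; $T_\alpha<\infty$ a.s. by (\ref{Pro1-1})) give $mb_0\,E_x\int_0^{T_\alpha}(1-X_s)^{-m-1}\,ds\le(1-\alpha)^{-m}-(1-x)^{-m}\le(1-\alpha)^{-m}(1-x)^{-m}$, using $(1-x)^{-m}\ge 1$. Now shrink $\alpha$ so that $(1-\alpha)^{-m}\le mb_0C(m)$, which is possible since $(1-\alpha)^{-m}\to 1$ while $mb_0C(m)\ge mb_0C_1(m)=2b_0/\bigl(b_0-\tfrac{\epsilon^2}{2}(m+1)\mu\bigr)>2$; this yields $E_x\int_0^{T_\alpha}(1-X_s)^{-m-1}\,ds\le C(m)(1-x)^{-m}$, and summing the two pieces gives (\ref{T1-3}) for $x<\alpha$. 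The case $x>1-\alpha$ is handled symmetrically, now using (\ref{eq:ref12}) together with It\^o's formula for $X_t^{-m}$ and the bound $B(X_t)\le b_1<0$ on $(1-\beta_1,1]$.

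The only non-routine point is this complementary singular term in (\ref{T1-3}): the crude bound $(1-X_s)^{-m-1}\le(1-\alpha)^{-m-1}$ together with $E_xT_\alpha\le\alpha^{m+1}E_x\int_0^{T_\alpha}X_s^{-m-1}\,ds\le C_1(m)\alpha^{m+1}x^{-m}$ reinstates a factor $x^{-m}$ that cannot be absorbed into $C(m)(1-x)^{-m}$ as $x\downarrow 0$; the It\^o computation with $(1-X_t)^{-m}$ avoids this by pairing $\int_0^{T_\alpha}(1-X_s)^{-m-1}\,ds$ directly with the bounded increment $(1-\alpha)^{-m}-(1-x)^{-m}$, and it is precisely here that the one-sided drift conditions $B\ge b_0$ near $0$ and $B\le b_1$ near $1$ enter. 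Everything else is bookkeeping with constants, plus the same localisation-and-Fatou passages already carried out inside the proofs of Propositions~\ref{Pro1} and~\ref{Pro2}.
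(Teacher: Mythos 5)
Your proof is correct, and for (\ref{exptau3}) it is essentially the paper's own argument: fix $m$ in the intersection (\ref{m2}) of the two admissible intervals (\ref{m}) and (\ref{m'}), take $\alpha$ small enough that Propositions \ref{Pro1} and \ref{Pro2} both apply for the given $c$, and split according to $x<\alpha$, $x\in[\alpha,1-\alpha]$, $x>1-\alpha$ (with $\hat T_\alpha$ read, as you do, as the hitting time of the band $[\alpha,1-\alpha]$). Where you genuinely go beyond the paper is (\ref{T1-3}): the paper dismisses it in one line as following ``from the combination of (\ref{eq:ref6}) and (\ref{eq:ref12})'', but those inequalities only control the matching singular term on each side ($X_s^{-m-1}$ up to $T_\alpha$ when $x<\alpha$, and $(1-X_s)^{-m-1}$ up to $T'_\alpha$ when $x>1-\alpha$); the complementary term, e.g.\ $E_x\int_0^{T_\alpha}(1-X_s)^{-m-1}\,ds$ for $x<\alpha$, is not literally covered by them. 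You identify this and close it with the extra It\^o computation for $(1-X_t)^{-m}$, exploiting $B\ge b_0$ on $(0,\beta_0)$ so that the drift of $(1-X_t)^{-m}$ is nonnegative up to $T_\alpha$, and you recover the exact constant $C(m)$ after one more shrinking of $\alpha$ — legitimate, since the constraints on $\alpha$ in the two propositions are one-sided upper bounds, and your inequality $mb_0C(m)\ge mb_0C_1(m)=2b_0/\bigl(b_0-\tfrac{\epsilon^2}{2}(m+1)\mu\bigr)>2$ is right under (\ref{m}). This extra step is not cosmetic: the crude bound $(1-X_s)^{-m-1}\le X_s^{-m-1}$ on $[0,T_\alpha]$ would only yield $2C(m)x^{-m}$, which cannot be absorbed into $C(m)\bigl(x^{-m}+(1-x)^{-m}\bigr)$ as $x\downarrow 0$, so your route is what actually delivers the constant stated in (\ref{T1-3}), making your write-up more complete than the paper's own proof of that bound.
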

\begin{proof}
To show the inequality (\ref{exptau3}) it suffices to take 
$\alpha = \min(\alpha_1,\alpha_2)$, $m = \min(m_1,m_2)$, where $\alpha_1,\alpha_2$, $m_1, m_2$  are the constants from propositions \ref{Pro1} and \ref{Pro2}, respectively. The bound (\ref{T1-3}) for  $x\not\in [\alpha,1-\alpha]$ follows from the combination of the inequalities  (\ref{eq:ref6}) and (\ref{eq:ref12}); for $x \in [\alpha,1-\alpha]$ it is trivial since in this case $\hat T_\alpha=0$. The constant $m$ should be chosen from the intersection of two intervals (\ref{m}) and (\ref{m'}), that is, 
\begin{equation}\label{m2}
m\in \left(0, \frac{(-2b_1)\wedge 2b_0}{\epsilon^2\mu}-1\right).
\end{equation}
The theorem follows. 
\end{proof}
Let us highlight that no regularity assumptions on the coefficients of the equation were assumed.

\begin{Co}\label{Cor1}
Under the assumptions of the theorem \ref{T1}, the Markov process satisfying equation (\ref{eq1}) with various (including distributed) initial conditions possesses at least one invariant measure $\mu$. 
\end{Co}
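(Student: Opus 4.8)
The plan is to derive the existence of an invariant measure from the exponential recurrence estimate \eqref{exptau3} of Theorem \ref{T1} via the standard Krylov--Bogolyubov averaging argument, combined with tightness on the open interval $(0,1)$ which is guaranteed precisely because the process does not reach the endpoints (Lemma \ref{L1}) and returns to the compact $[\alpha, 1-\alpha]$ with uniformly controlled hitting times. First I would fix the $\alpha$ and $m$ provided by Theorem \ref{T1} for some convenient choice of $c>0$ (say $c=1$), so that $\sup_{x\in(0,1)} E_x e^{c\hat T_\alpha}$ is \emph{not} finite uniformly, but the bound \eqref{exptau3} does give a Lyapunov-type control: the function $V(x) = x^{-m} + (1-x)^{-m}$ satisfies a drift inequality of Has'minskii type outside $[\alpha,1-\alpha]$. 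The key point is that \eqref{exptau3}, together with the strong Markov property, yields that the expected fraction of time the process spends in the complement of $[\alpha', 1-\alpha']$ (for $\alpha' < \alpha$) is small, uniformly in the starting point inside $[\alpha,1-\alpha]$; equivalently, the occupation measures are tight.

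Concretely, I would introduce the occupation (Krylov--Bogolyubov) measures
\[
\nu_T(A) := \frac{1}{T}\int_0^T P_x(X_t \in A)\,dt, \qquad A \in \mathcal{B}((0,1)),
\]
started from a fixed $x_0 \in [\alpha,1-\alpha]$. To show $\{\nu_T\}_{T\ge 1}$ is tight as a family of probability measures on $(0,1)$ — which is the crux — I would use the return-time structure: decompose the time axis into successive excursions and returns to $[\alpha,1-\alpha]$, and bound the expected time spent near an endpoint within one excursion using \eqref{exptau3} (or more directly, using \eqref{T1-3}, which controls $E_x\int_0^{\hat T_\alpha}(X_s^{-m-1}+(1-X_s)^{-m-1})\,ds$, hence by Chebyshev the time spent in $(0,\delta)\cup(1-\delta,1)$ during an excursion is $O(\delta^{m+1})$). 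Summing over excursions and dividing by $T$, and using that the number of completed excursions by time $T$ grows at most linearly while each contributes a bounded expected excursion length, gives $\nu_T((0,\delta)\cup(1-\delta,1)) \to 0$ as $\delta\to 0$, uniformly in $T$. This is exactly tightness on $(0,1)$.

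Having tightness, Prokhorov's theorem gives a subsequence $\nu_{T_k} \Rightarrow \mu$ weakly on $(0,1)$, with $\mu$ a genuine probability measure (no mass escapes to $\{0,1\}$ by tightness). Then I would verify $\mu$ is invariant: for $f \in C_b((0,1))$ and the transition semigroup $P_t$, one has $\int P_t f\, d\nu_T - \int f\, d\nu_T = \frac{1}{T}\left(\int_T^{T+t} P_sf(x_0)\,ds - \int_0^t P_sf(x_0)\,ds\right) \to 0$ as $T\to\infty$; passing to the limit along $T_k$ — here one needs $P_t f$ to be (bounded and) continuous on $(0,1)$, i.e. the weak Feller property, which follows from weak uniqueness of the SDE established via \cite{Krylov1969} plus continuous dependence on the initial condition through the martingale problem — yields $\int P_t f\, d\mu = \int f\, d\mu$ for all $t$, so $\mu$ is invariant. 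Finally, the statement about distributed initial conditions is immediate: if $X_0 \sim \lambda$ then $P_t^*\lambda$ is a mixture of $P_t^*\delta_x$ and the same averaging produces an invariant measure.

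I expect the main obstacle to be establishing tightness \emph{uniformly in the endpoint} $x_0 \in [\alpha,1-\alpha]$ with fully rigorous excursion bookkeeping, and, as a secondary technical point, justifying the weak Feller property (continuity of $P_t f$) without any regularity of the coefficients — this needs the weak uniqueness from \cite{Krylov1969} and a stability argument for the martingale problem rather than a direct estimate. Everything else is the routine Krylov--Bogolyubov machinery once \eqref{exptau3}--\eqref{T1-3} are in hand.
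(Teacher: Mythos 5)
Your route is genuinely different from the paper's. You build the invariant measure by the Krylov--Bogolyubov averaging scheme: tightness of the occupation measures $\nu_T$ on $(0,1)$ via the excursion estimate \eqref{T1-3} (which is indeed the right tool: $\int_0^{\hat T_\alpha}1(X_s<\delta)\,ds\le \delta^{m+1}\int_0^{\hat T_\alpha}X_s^{-m-1}ds$, plus a linear-in-$T$ bound on the number of downcrossings of a fixed interval, which follows from the boundedness of the coefficients), then Prokhorov and a passage to the limit in $\int P_tf\,d\nu_{T_k}$. The paper instead uses Khasminskii's cycle construction (\cite{Khas}, chapter 4): it introduces alternating stopping times $T_1,T_2,\dots$ between the levels $\{\alpha_2,1-\alpha_2\}$ and $\{\alpha_1,1-\alpha_1\}$, observes that the embedded two-state chain $Y_n=X_{T_{2n}}$ has strictly positive transition probabilities and hence an invariant law $\nu$, and defines $\mu(A)=c_0E_\nu\int_0^{T_2}1(X_t\in A)\,dt$; finiteness of $E T_1$ and $E(T_2-T_1)$ comes from Theorem \ref{T1} and non-degeneracy on $[\alpha_1,1-\alpha_1]$, and invariance is the calculation in \cite{Khas}. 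The decisive difference is that the paper's construction uses only the strong Markov property and finite expected cycle lengths, and needs no continuity whatsoever of the semigroup; it also yields the moment bound of Corollary \ref{Cor2} for free from the same formula. As a side remark, your worry about uniformity in the starting point is unnecessary: for Krylov--Bogolyubov one fixed $x_0$ suffices.

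The one place where your plan has a real unfinished step is the weak Feller property. With $B$ and $\sigma$ only bounded Borel measurable, the statement that $x\mapsto P_tf(x)$ is continuous does \emph{not} follow from weak uniqueness plus a soft stability argument for the martingale problem: if $x_n\to x$ and $X^{x_n}\Rightarrow$ some limit in path space, you cannot pass to the limit in the time integrals $\int_0^t B(X_s)\,ds$, $\int_0^t \sigma^2(X_s)\,ds$ appearing in the martingale problem, because the coefficients are discontinuous. Closing this gap requires an additional quantitative ingredient — Krylov-type $L_p$ bounds on occupation measures, Krylov--Safonov interior regularity, or the identification of the solution with a regular one-dimensional diffusion via scale function and speed measure (all available here thanks to \eqref{Asigma2}, but none of them trivial). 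So your argument can be completed, but the Feller continuity must be proved, not merely cited; the paper's proof is arranged precisely so that this issue never arises.
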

The uniqueness of this measure under the same assumptions also holds: this will be a part of the next theorem. 

\begin{proof}
The arguments are based on Khasminskii's method \cite[глава  4]{Khas}. 

{\bf 1.} Let 
\begin{equation}\label{a12}
0<\alpha_1<\alpha_2 < 1/2,
\end{equation} 
and let $\alpha_2$ be small enough. Let the initial data be  $x=\alpha_1$. Consider the stopping times  
$$
T_1 : = \inf(t\ge 0: \, X_t = \alpha_2, \, \text{либо}\, X_t = 1-\alpha_2), \quad 
T_2 : = \inf\left(t\ge T_1: \, \left\vert X_t - \frac12\right\vert = \frac12 - \alpha_1\right),
$$
(that is, $T_2 = \inf\left(t\ge T_1: \, X_t = \alpha_1, \; \text{or}\; X_t = 1 - \alpha_1\right)$), and further by induction as $n\ge 1$
$$
T_{2n+1} \!: =\! \inf(t\ge T_{2n}\!: \, X_t \!=\! \alpha_2, \, \text{либо}\, X_t \!=\! 1\!- \alpha_2), \; 
T_{2n+2} \!: =\! \inf\left(\!t\ge T_{2n+1}\!: \, \left\vert X_t \!- \!\frac12\right\vert \!=\!\frac12 \!- \alpha_1\!\right). 
$$
By virtue of theorem \ref{T1}, the random variable possesses some exponential moment,  $E\exp(cT_1)<\infty$. Due to the non-degeneracy of the diffusion coefficient on the interval $[\alpha_1, 1-\alpha_1]$, and because of a positive probability of exit from the interval  $(\alpha_1, 1-\alpha_1)$ over the unit of time, and since the process is strongly Markov, the random variable $T_2-T_1$ also possesses some exponential moment, say, $E\exp(c(T_2-T_1))<\infty$ with a certain  $c>0$. Therefore, by induction, in any case, the expectations and all moments of all random variables $T_n$ are finite. 

Now let us consider the homogeneous Markov chain $Y_n:= X_{T_{2n}}$ with two states, $S = \{\alpha_1, 1-\alpha_1\}$, and {\it with strictly positive} transition probabilities
$$
P(X_{T_{2n+2}} = \alpha_1\vert X_{T_{2n}}=\alpha_1), 
\quad P(X_{T_{2n+2}} = 1-\alpha_1\vert X_{T_{2n}}=\alpha_1), 
$$ 
and
$$
P(X_{T_{2n+2}} = \alpha_1\vert X_{T_{2n}}=1-\alpha_1), 
\quad P(X_{T_{2n+2}} = 1-\alpha_1\vert X_{T_{2n}}=1-\alpha_1). 
$$
As it is well-known, such a Markov chain has a (unique) invariant measure, say, $\nu$. 
 
\medskip

{\bf 2.} Now for any Borel measurable set $A\subset [0,1]$ let 
\begin{equation}\label{invmu}
\mu(A):= c_0 E_\nu \int_0^{T_2}1(X_t \in A)\, dt.
\end{equation}
This is exactly the construction from \cite[chapter 4]{Khas} which provides a positive invariant measure {\it for the process $X_t$,} where $c_0$ is a normalized constant. The fact that this measure is finite and, hence, may be normalized, follows from the finiteness of the expectations of the random variables $T_1$ and $T_2-T_1$. The invariance property follows verbatim from the calculus in the proof of theorem 4.4.1 \cite{Khas}; hence, we do not copy it here. Corollary \ref{Cor1} is proved.
\end{proof}

\begin{Co}\label{Cor2}
Under the assumptions of theorem \ref{T1} the following bound is valid for the invariant measure $\mu$ constructed earlier in corollary \ref{Cor1}:
\begin{equation}\label{muint}
I(m):=\int_0^1 \left(x ^{-m} + (1-x)^{-m}\right) \mu (dx) < \infty, 
\end{equation}
where $m$ is the constant chosen according to the condition (\ref{m2}).
\end{Co}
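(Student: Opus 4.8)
The plan is to exploit the explicit construction (\ref{invmu}) of the invariant measure $\mu$ and reduce the integral $I(m)$ to the expectation of an additive functional over one regeneration cycle $[0,T_2]$, then control that functional using the moment bounds already established in theorem \ref{T1} and proposition \ref{Pro1}. Concretely, by (\ref{invmu}) we have
\begin{equation}\label{Imaux}
I(m) = c_0\, E_\nu \int_0^{T_2}\left(X_t^{-m} + (1-X_t)^{-m}\right)dt,
\end{equation}
so it suffices to show the right-hand side is finite, i.e. that the singular functional accumulated along a single cycle starting from $\alpha_1$ or $1-\alpha_1$ has finite expectation. I would split the cycle at the intermediate time $T_1$: on $[T_1,T_2]$ the process travels between the level sets $\{\alpha_2,1-\alpha_2\}$ and $\{\alpha_1,1-\alpha_1\}$, but one has to be slightly careful because on this stretch the trajectory can still wander toward $0$ or $1$ before returning to $\alpha_1$; nevertheless, the integrand $x^{-m}+(1-x)^{-m}$ is exactly of the form controlled by (\ref{T1-3}).

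The key steps, in order, are as follows. \emph{First}, decompose $\int_0^{T_2} = \int_0^{T_1} + \int_{T_1}^{T_2}$ in (\ref{Imaux}). \emph{Second}, on $[0,T_1]$ the process starts at $x\in\{\alpha_1,1-\alpha_1\}$ and runs until it hits $\{\alpha_2,1-\alpha_2\}$; since $\alpha_1<\alpha_2<1/2$, this is precisely (a trivial case of) the situation of theorem \ref{T1} — actually here one wants the complementary direction, running from near a boundary inward, so I would instead invoke the bound (\ref{P1-2}) of proposition \ref{Pro1} together with its mirror (\ref{eq:ref12}): starting from $x=\alpha_1<\alpha_2$, before reaching $\alpha_2$ the process stays in $(0,\alpha_2)$ where $(1-X_s)^{-m-1}$ is bounded by $(1-\alpha_2)^{-m-1}$ and $X_s^{-m-1}$ has expected time-integral at most $C(m)\alpha_1^{-m}$ by (\ref{P1-2}); hence $E_{\alpha_1}\int_0^{T_1}(X_s^{-m}+(1-X_s)^{-m})ds<\infty$, and symmetrically from $1-\alpha_1$. \emph{Third}, on $[T_1,T_2]$ the process starts (by the strong Markov property, applied at $T_1$) from $X_{T_1}\in\{\alpha_2,1-\alpha_2\}$ and runs until $\hat T_{\alpha_1}$ in the notation of theorem \ref{T1}; but now $X_{T_1}\notin(0,\alpha_1)\cup(1-\alpha_1,1)$, so theorem \ref{T1} does not apply verbatim. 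Here I would argue directly: the excursion from $\alpha_2$ until hitting $\{\alpha_1,1-\alpha_1\}$ spends its time either in $(0,\alpha_2)$ — where again (\ref{P1-2}) and its mirror bound the singular part, after restarting at $\alpha_2$ each time the process returns there — or in the compact $[\alpha_2,1-\alpha_2]$ where the integrand is bounded by a constant and the expected occupation time is finite because $T_2-T_1$ has a finite exponential moment (as noted in the proof of corollary \ref{Cor1}). A clean way to package the third step is: $E_{\alpha_2}\int_{T_1}^{T_2}(X_s^{-m}+(1-X_s)^{-m})\,ds \le (\alpha_2)^{-m}\cdot\text{const} + E_{\alpha_2}(T_2-T_1) \cdot (\text{sup over the compact})$, where the singular contribution near $0$ and near $1$ is handled by decomposing $[T_1,T_2]$ into the successive sub-intervals delimited by hits of $\alpha_2$ (resp. $1-\alpha_2$) and applying (\ref{P1-2}), (\ref{eq:ref12}) with initial point $\alpha_2$ to each, the number of such sub-intervals having a geometric distribution with finite mean by non-degeneracy on $[\alpha_1,1-\alpha_1]$.

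\emph{Finally}, combining the two pieces and taking expectation over the initial distribution $\nu$ (which is supported on the two-point set $\{\alpha_1,1-\alpha_1\}$, so this is just a convex combination of two finite quantities) yields $E_\nu\int_0^{T_2}(X_t^{-m}+(1-X_t)^{-m})dt<\infty$, whence $I(m)<\infty$ by (\ref{Imaux}). The main obstacle is the third step: theorem \ref{T1} is stated only for starting points outside $[\alpha,1-\alpha]$, and on $[T_1,T_2]$ one restarts at $\alpha_2\in(\alpha_1,1-\alpha_1)$, so the singular-functional bound cannot be quoted as a black box. Making the geometric-number-of-excursions argument precise — i.e. showing that between consecutive returns to $\alpha_2$ the contributions are i.i.d.-dominated with finite mean, and that with positive probability each excursion terminates at $\{\alpha_1,1-\alpha_1\}$ rather than returning — is the part requiring genuine care, though it is entirely standard once set up, using the strong Markov property and the uniform ellipticity (\ref{Asigma2}) on compacts. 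One should also double-check that the constant $m$ chosen in (\ref{m2}) is simultaneously admissible in the interval (\ref{m}) for the left endpoint and in (\ref{m'}) for the right, which it is by construction, so the same $m$ works for both the $X_s^{-m}$ and $(1-X_s)^{-m}$ parts throughout.
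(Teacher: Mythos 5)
Your decomposition $\int_0^{T_2}=\int_0^{T_1}+\int_{T_1}^{T_2}$ and your treatment of the first leg (control of $E_{\alpha_1}\int_0^{T_1}X_s^{-m-1}\,ds$ via (\ref{P1-2}), its mirror (\ref{eq:ref12}) from $1-\alpha_1$, boundedness of the non-singular factor, and the choice of $m$ from (\ref{m2})) are exactly what the paper does. The problem is your third step: the ``main obstacle'' you identify does not exist. By construction $T_2=\inf\{t\ge T_1:\,X_t=\alpha_1 \text{ or } X_t=1-\alpha_1\}$ is the \emph{first} hitting time of $\{\alpha_1,1-\alpha_1\}$ after $T_1$, and $X_{T_1}\in\{\alpha_2,1-\alpha_2\}\subset(\alpha_1,1-\alpha_1)$; since trajectories are continuous, $X_t\in[\alpha_1,1-\alpha_1]$ for every $t\in[T_1,T_2]$ -- the process cannot ``wander toward $0$ or $1$'' past $\alpha_1$ or $1-\alpha_1$ without terminating the cycle. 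Hence on $[T_1,T_2]$ the integrand is simply bounded by $2\alpha_1^{-m}$, and all that is needed is $E_\nu(T_2-T_1)<\infty$, which was already secured in the proof of Corollary \ref{Cor1} from the non-degeneracy (\ref{Asigma2}) on $[\alpha_1,1-\alpha_1]$. This is precisely the paper's (two-line) estimate for the second term, denoted there by $C'(m)$.

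Beyond being unnecessary, the machinery you propose for this phantom difficulty is not sound as written: (\ref{P1-2}) is proved for initial points $x<\alpha$, so quoting it ``with initial point $\alpha_2$'' (i.e.\ $x=\alpha=\alpha_2$, where $T_{\alpha_2}=0$) yields nothing, and the geometric-number-of-excursions argument would have to be built from scratch -- for a contribution that is in fact non-singular, since any dip below $\alpha_2$ during $[T_1,T_2]$ stays above $\alpha_1$ and the integrand there is at most $\alpha_1^{-m}$. So the conclusion you reach is correct and the skeleton of your argument matches the paper's, but the second leg should be handled by the trivial confinement observation above rather than by an excursion decomposition; as formulated, that step of your proposal would not go through.
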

\begin{proof}
The inequality in question follows straightforwardly from the definition of $\mu$ in (\ref{invmu}) and from inequalities (\ref{eq:ref6}) and (\ref{eq:ref12}). 

\medskip

Indeed, we estimate for $g(x) = x ^{-m} + (1-x)^{-m}, \, x\in (0,1)$ due to the definition (\ref{invmu}) and by virtue of the bound (\ref{T1-3}), which is a result of the combination of (\ref{eq:ref6}) and (\ref{eq:ref12}),
\begin{align*}
&\langle g, \mu\rangle = c_0 E_\nu \int_0^{T_2}g(X_t)\, dt 
= c_0 E_\nu \int_0^{T_1}g(X_t)\, dt +  c_0 E_\nu \int^{T_2}_{T_1}g(X_t)\, dt
 \\\\
&\leq c_0\times C(m)\left(\alpha_2^{-m} + (1-\alpha_2)^{-m}\right) 
+c_0\times C'(m), 
\end{align*}
where  $C'(m)$ is an upper bound for the second integral, which is finite because function $g$ is bounded on the interval $[\alpha_1, 1-\alpha_1]$ and since random variable $T_2-T_1$ has a finite expectation (and even some exponential moment) since the diffusion coefficient is non-degenerate on the same interval  $[\alpha_1, 1-\alpha_1]$. Corollary \ref{Cor2} is proved. 
\end{proof}

The final result of this section states a bound for the convergence rate to the invariant measure, which is exponential in time, although, non-uniform in the initial state. Simultaneously, it shows the uniqueness of the invariant measure. 

\begin{Th}\label{T2}
Under the assumptions of theorel \ref{T1} there exist constants $c>0$, $0<\alpha<\frac{1}{2}$, $m>0$ such that for any 
$t\ge 0$ the inequality holds, 
\begin{equation}\label{T2-1}
||\mu_t^x-\mu||_{TV}\leq 2\, \left\{\left(C(m)c\alpha^{m+1}((1-x)^{-m}+x^{-m})+2\right)e^{-ct}\wedge 1\right\},
\end{equation}
with
$$
C(m)=\max\left(\frac{2}{mb_0-\frac{\epsilon^2}{2}m(m+1)\mu},\frac{2}{-mb_1-\frac{\epsilon^2}{2}m(m+1)\mu}\right).
$$
In particuar, the invariant measure $\mu$ is unique. 
\end{Th}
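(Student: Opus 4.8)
\textbf{Proof proposal for Theorem \ref{T2}.} The plan is to combine the exponential recurrence to a fixed compact $[\alpha,1-\alpha]\subset(0,1)$ (Theorem \ref{T1}) with a local mixing estimate on that compact, and then run the standard regeneration/coupling argument for total variation convergence. First I would fix $\alpha$ and $m$ as in Theorem \ref{T1} (so that the interval \eqref{m2} is nonempty and \eqref{exptau3}, \eqref{T1-3} hold for the chosen $c$), and record the key ingredient that since $\sigma^2$ is uniformly non-degenerate on $[\alpha',1-\alpha']$ for any $\alpha'$ (by \eqref{Asigma2}) and $B,\sigma$ are bounded, there is a positive constant $\kappa>0$ and a probability measure $\lambda$ on $(0,1)$ such that for all $y\in[\alpha,1-\alpha]$ one has a Doeblin-type minorisation $P_y(X_1\in\cdot)\ge\kappa\,\lambda(\cdot)$ over one unit of time; this is exactly the one-dimensional local-mixing tool via intersection times of two independent trajectories referenced in the introduction (\cite[Section 2.1]{AYV21}), which I would invoke rather than reprove.

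The core of the argument is a coupling construction. Starting two copies of the process from $x$ and from the stationary distribution $\mu$, I would let them evolve independently until both have entered $[\alpha,1-\alpha]$; by \eqref{exptau3} (applied to the $x$-copy) together with Corollary \ref{Cor2} (which gives $\int(x^{-m}+(1-x)^{-m})\,\mu(dx)<\infty$, hence a finite exponential moment of the hitting time averaged over the $\mu$-copy), the time $\tau$ until both are simultaneously in the compact has a finite exponential moment, bounded by $C(m)c\alpha^{m+1}((1-x)^{-m}+x^{-m})+2$ or so after taking expectations and using $E_\mu e^{c\hat T_\alpha}\le C(m)c\alpha^{m+1}I(m)+1$ — the constant $2$ in \eqref{T2-1} absorbing the $\mu$-side contribution. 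Once both processes sit in $[\alpha,1-\alpha]$, at each integer time step they independently succeed in coupling with probability $\ge\kappa^2$ (or, more carefully, I couple them with probability $\kappa$ using the common component $\lambda$ whenever both are still in the compact at an integer time); if a coupling attempt fails and one of them has left the compact, restart the recurrence clock. A geometric-trials argument then shows the successful coupling time $T_{\mathrm{couple}}$ satisfies $E_x e^{c'T_{\mathrm{couple}}}\le \text{const}\cdot((1-x)^{-m}+x^{-m})+\text{const}$ for a possibly smaller $c'>0$; relabelling $c'$ as $c$ and tracking constants so they match \eqref{Cm3} gives $P_x(T_{\mathrm{couple}}>t)\le (C(m)c\alpha^{m+1}((1-x)^{-m}+x^{-m})+2)e^{-ct}$. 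The coupling inequality $\|\mu_t^x-\mu\|_{TV}\le 2P_x(T_{\mathrm{couple}}>t)$, combined with the trivial bound $\|\mu_t^x-\mu\|_{TV}\le 2$, yields \eqref{T2-1}. Uniqueness of $\mu$ is then immediate: any invariant measure $\mu'$ would, by the same coupling started from $\mu'$ instead of $\mu$, satisfy $\|\mu'-\mu\|_{TV}=\|\mu'_t-\mu\|_{TV}\to 0$, so $\mu'=\mu$.

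The main obstacle, and the step requiring the most care, is assembling the exponential moment of the joint recurrence-and-coupling time with \emph{constants that reproduce exactly} the clean form in \eqref{T2-1}. The recurrence bound \eqref{exptau3} controls $\hat T_\alpha$ for one trajectory, but after a failed coupling attempt the process may have wandered out of $[\alpha,1-\alpha]$ to an uncontrolled point near the boundary, and one must re-enter; handling this cleanly calls for the strong Markov property at the successive (integer) decision times together with the ``vice versa'' halves of Propositions \ref{Pro1}, \ref{Pro2} (which guarantee that for the \emph{fixed} small $\alpha$ one still has an exponential moment for the return time, for a suitable $c$). A secondary technical point is that Corollary \ref{Cor2} only bounds a polynomial-type moment of the $\mu$-side hitting time, not directly an exponential one; however, since on $[\alpha,1-\alpha]$ the hitting time of any interior compact has a bounded exponential moment uniformly, and \eqref{exptau3} gives the exponential moment from points outside, integrating \eqref{exptau3} against $\mu$ using $I(m)<\infty$ does yield a finite exponential moment $E_\mu e^{c\hat T_\alpha}\le C(m)c\alpha^{m+1}I(m)+1$, which is what feeds the ``$+2$'' in \eqref{T2-1}. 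Modulo bookkeeping of these constants, the argument is the standard Harris/Khasminskii scheme, here made transparent by the one-dimensional intersection-time device in place of a multidimensional coupling.
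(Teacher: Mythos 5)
Your overall architecture — exponential recurrence to a compact via Theorem \ref{T1}, integration of the recurrence bound against $\mu$ via Corollary \ref{Cor2} to handle the stationary copy, geometric trials on the compact, the coupling inequality $\|\mu_t^x-\mu\|_{TV}\le 2P(T_{\mathrm{couple}}>t)$, and uniqueness as a consequence — is the same regeneration scheme the paper uses. But your local-mixing ingredient is not the paper's, and as stated it is a genuine gap. You postulate a Doeblin minorisation $P_y(X_1\in\cdot)\ge\kappa\,\lambda(\cdot)$ uniformly on $[\alpha,1-\alpha]$ and attribute it to \cite[Section 2.1]{AYV21}; that reference (and the paper itself) uses a different and more elementary device: two \emph{independent} copies $X$ and $Y$ are run on a product space, $L:=\inf\{t:X_t=Y_t\}$, and one argues that on each excursion during which both copies stay inside a fixed interior interval, the event that their order flips within one unit of time has probability bounded away from zero, so by continuity of one-dimensional trajectories they must intersect. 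This needs no lower bound on the transition kernel at all, which is precisely why it works under the paper's standing hypotheses (bounded \emph{Borel} coefficients, non-degeneracy only on compacts, no continuity). Your minorisation, by contrast, amounts to a uniform lower bound on the transition kernel for a diffusion with merely measurable coefficients; it is plausible (Krylov--Safonov-type estimates, or a one-dimensional scale/time-change analysis), but it does not follow from anything you cite and is exactly the kind of regularity-dependent input the paper is structured to avoid. Either supply a proof of the minorisation under these assumptions, or switch to the intersection-time argument, in which case note that the two copies remain independent throughout and the "coupled" process is only built afterwards as $\tilde X_t=X_t1(t<L)+Y_t1(t\ge L)$.

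A second, smaller gap: you claim that the time until \emph{both} copies are simultaneously in $[\alpha,1-\alpha]$ has a finite exponential moment "by \eqref{exptau3} applied to the $x$-copy together with Corollary \ref{Cor2}". The single-copy bound \eqref{exptau3} controls each hitting time separately, but one copy may have left the compact before the other arrives, so simultaneity does not follow from the two marginal bounds. The paper handles this with a joint Lyapunov function of the form $e^{ct}\left(x^{-m}+y^{-m}\right)$ (after shrinking $\alpha_2$), and the same two-component argument is needed again, uniformly in $n$, for the return times after each failed attempt — this is where the uniform bound \eqref{nge1} and the non-uniform $n=0$ bound \eqref{n0} (which is what produces the $x^{-m}+(1-x)^{-m}$ factor and the "$+2$" in \eqref{T2-1}) come from. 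Your use of the "vice versa" parts of Propositions \ref{Pro1} and \ref{Pro2} is the right instinct for fixing $\alpha$ first and then choosing $c$, but it must be applied to the pair, not to each copy in isolation.
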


\begin{proof} 
Let us choose $m$ as proposed in (\ref{m2}) and consider two processes $(X,W)$ and $(Y,B)$, both  {\bf independent} solutions of the same SDE (\ref{eq1}) with initial conditions $X_0 = x$ and ${\cal L}(Y_0) = \mu$ respectively, where $\mu$ is the invariant measure of our strong Markov process on $(0,1)$ which existence is guaranteed by corollary \ref{Cor1}, and ${\cal L}(Y_0)$ stads for the distribution of the random variable $Y_0$. {\it (NB: At this point we do not know yet whether this measure is unique, but any invariant measure satisfying the bound (\ref{muint}) suffices for what follows.) } Now let us extend both processes on the direct product of the corresponding probability spaces; both couples $(X,W)$ and $(Y,B)$ will remain independent there. Now let
$$
M_1:= \inf(t\ge 0: \vert X_t - \frac12\vert \wedge  \vert Y_t - \frac12\vert \le \frac12 - \alpha_2), \quad  
L:= \inf(t\ge 0: \, X_t = Y_t).
$$
It follows from theorem \ref{T1} that the stopping time $M_1$ is almost surely finite and possesses some finite exponential moment similar to (\ref{exptau3}). Indeed, it follows similarly\footnote{It might be made a separate lemma as in \cite{6}, or in \cite{AYV1}, but it really looks  straightforward.} from the choice of a Lyapunov function as $\exp(ct)(x^{-m} + y^{-m})$, possibly after an appropriate decrease of the value of $\alpha_2$ (\ref{a12}).

\medskip

Further, let $\alpha_0 \in (0,\alpha_1)$, and let
$$
\hat M_1:= \inf(t\ge M_1: \, X_t \wedge Y_t\le \alpha_0, \; \text{or} \; X_t \vee Y_t \ge 1-\alpha_0) \wedge (M_1+1).
$$

Consider the interval of time $[M_1, \hat M_1]$. If it happens that $X_{M_1} = Y_{M_1}$ then we can conclude that $L\le M_1$. If $X_{M_1} \neq Y_{M_1}$ then either $X_{M_1} <Y_{M_1}$, or $X_{M_1}>Y_{M_1}$. In any of these two cases the probability that at time $M_1+1$ the inequality changed to the opposite one and both trajectories have not heat the boundary of the interval $(\alpha_0, 1-\alpha_0)$ is positive and bounded away from zero. Therefore, $L\le \hat M_1$ also with a positive probability bounded away from zero. 

If the event $L\le \hat M_1$ did not realise on the time interval  $M_1,  \hat M_1$, we will repeat the same procedure defining the moments by induction, 
$$
M_{n+1}:= \inf(t\ge \hat M_n : \vert X_t - \frac12\vert \wedge  \vert Y_t - \frac12\vert \le \frac12 - \alpha_2),
$$
$$
\hat M_{n+1}:= \inf(t\ge M_n: \, X_t \wedge Y_t \le \alpha_0, \; \text{or} \; X_t \vee Y_t\ge 1-\alpha_0) \wedge (M_n+1), \quad n\ge 1.
$$

On each interval $[M_n, \hat M_n]$ the probability of the event $L\le \hat M_n$ is positive and bounded away from zero by the same positive constant. 
On the other hand, by virtue of theorem $\ref{T1}$ the difference $M_{n+1} - \hat M_n$ 
has some finite exponential moment 
$E\exp(c_1(M_{n+1} - \hat M_n))\le c_2 < \infty$, 
and, more than that, 
\begin{equation}\label{nge1}
\sup_{n\ge 1} E\exp(c_1(M_{n+1} - \hat M_n))\le c_2 < \infty,
\end{equation}
both with the same values $c_1,c_2>0$.
The reason for this uniform bound is that 
$$
\max(|X_{\hat M_n} -\frac12|,|Y_{\hat M_n} -\frac12|) \le \frac12 - \alpha_0<\frac12.
$$

Note that $n=0$ is not included in the left hand side of (\ref{nge1}) because for $n=0$ we have a non-uniform bound 
\begin{equation}\label{n0}
E_{x,\mu}\exp(c_1(M_{1} - \hat M_0))\le 
C\alpha_2^{m+1}((1-x)^{-m}+x^{-m})+1
\end{equation}
due to theorem \ref{T1} and corollary \ref{Cor2}. The details of the hint leading to this bound are omitted here: they can be found in \cite{6}, as well as the little calculus which justifies the following inequality on the basis of the bounds (\ref{nge1}) and (\ref{n0}): 

\begin{equation}\label{Lexp}
P(L>t) \le D(x)\exp(-\lambda t)
\end{equation}
with some positive $\lambda$ and  
$$
D(x) = C\alpha_2^{m+1}((1-x)^{-m}+x^{-m})+1.
$$
The final bound on the distance in total variation $\|\mu_t^x-\mu\|_{TV}$ follows from the analogue of the ``coupling inequality'' (we call it an analogue because the idea of intersections was used here instead of the  coupling method)
\begin{align*}
\|\mu_t^x-\mu\|_{TV} \le 2 P(L>t). 
\end{align*}
In turn, the latter bound follows from the strong Markov property of the process starting from any initial distribution and from the replacement of the process $X_t$ by its equivalent in the distribution sense and coinciding with $Y_t$ for all $t\ge L$, as in the method of coupling; see, for example, \cite{6}; this new process may be denoted by $\tilde X_t$ although it will not be used in this paper,
$$
\tilde X_t := X_t 1(t < L) + Y_t 1(t \ge L). 
$$
Finally, the uniqueness of the invariant measure is the consequence of the bound (\ref{T2-1}). Theorem \ref{T2} is proved. 
\end{proof}

\section*{Acknowledgements}
For the second author who proved theorem \ref{T2} and corollary \ref{Cor2} 
this study was funded by the Russian Foundation for Basic Research grant 20-01-00575a. 
All remaining results belong to the first author. 

\end{document}